\numberwithin{equation}{section}
\numberwithin{figure}{section}
\DeclareSymbolFont{bbold}{U}{bbold}{m}{n}
\DeclareSymbolFontAlphabet{\mathbbold}{bbold}
\newcommand{\ind}{\mathbbold{1}}
\theoremstyle{plain} \newtheorem{theorem}{Theorem}[section]
\theoremstyle{plain} 
\theoremstyle{plain} 
\theoremstyle{plain} \newtheorem{corollary}[theorem]{Corollary}
\theoremstyle{definition} \newtheorem{definition}[theorem]{Definition}
\theoremstyle{definition} 
\theoremstyle{remark} \newtheorem{remark}[theorem]{Remark}
\theoremstyle{remark}
\newcommand{\bN}{\mathbb{N}}
\newcommand{\bP}{\mathbb{P}}
\newcommand{\bR}{\mathbb{R}}
\newcommand{\bS}{\mathbb{S}}
\def\rr{\mathbf{r}}
\def\tt{\mathbf{t}}
\def\aa{\mathbf{a}}
\def\bb{\mathbf{b}}
\def\cc{\mathbf{c}}
\def\dd{\mathbf{d}}
\renewcommand{\ss}{\mathbf{s}}
\def\LL{\mathbf{L}}
\def\RR{\mathbf{R}}
\def\TT{\mathbf{T}}
\begin{document}

\title[Radix sort chain]{Radix sort trees in the large}

\author{Steven N. Evans}
\address{Department of Statistics\\
         University  of California\\ 
         367 Evans Hall \#3860\\
         Berkeley, CA 94720-3860 \\
         U.S.A.}

\email{evans@stat.berkeley.edu}
\thanks{SNE supported in part by NSF grant DMS-0907630, NSF grant DMS-1512933,
and NIH grant 1R01GM109454-01. 
AW supported in part by DFG priority program 1590.}

\author{Anton Wakolbinger}
\address{Institut f\"ur Mathematik \\
         Goethe-Universit\"at \\
         60054 Frankfurt am Main\\
         Germany}

\email{wakolbinger@math.uni-frankfurt.de}

\subjclass[2010]{Primary 60J50, secondary 60J10, 68W40}

\keywords{binary tree,  
tail $\sigma$-field, 
Doob--Martin kernel, 
harmonic function  
bridge,
exchangeability}

\date{\today}

\begin{abstract} The trie-based radix sort algorithm stores pairwise different infinite binary strings 
in the leaves of a binary tree in a way that the Ulam-Harris coding of each leaf equals 
a prefix (that is, an initial segment) of the corresponding string, with the prefixes being of minimal length 
so that they are pairwise different. We investigate the {\em radix sort tree chains} 
-- the tree-valued Markov chains that arise when successively storing infinite binary strings 
$Z_1,\ldots, Z_n$, $n=1,2,\ldots$ according to the trie-based radix sort algorithm, 
where the source strings $Z_1, Z_2,\ldots$ are independent and identically distributed. 
We  establish a bijective correspondence between  the full Doob--Martin boundary of the radix sort tree chain with a
{\em symmetric Bernoulli source} (that is, each $Z_k$ is a fair coin-tossing sequence)
and the family of radix sort tree chains for which the common distribution of the $Z_k$ 
is a diffuse probability measure on $\{0,1\}^\infty$. 
In essence, our result characterizes all the  ways that it is possible
 to condition such a chain of radix sort trees consistently on its behavior ``in the large''.
\end{abstract}

\maketitle

\tableofcontents

\section{Introduction}
\label{S:intro}

Various sorting algorithms proceed by storing the data in the leaves of a tree. If the data are infinite {\em binary strings} $z_1,\ldots, z_n \in \{0,1\}^\infty$, then a natural choice for the tree is the rooted binary tree with $n$ leaves chosen such that the Ulam-Harris coding of each of the leaves coincides with a finite initial segment (otherwise called a prefix or left factor) of one of the $z_j$, and such that these initial segments are pairwise different and have minimal length (see below for a fuller description). This data structure is the basis of the {\em Radix Sort} algorithm. The tree $R(z_1,\ldots, z_n)$ in whose leaves the $n$ strings are stored is sometimes called a {\em trie}, alluding to the word re{\em trie}val.

When the $n$ strings are random, drawn i.i.d. from a diffuse probability distribution $\nu$ on   $\{0,1\}^\infty$, then this construction gives rise to a random tree $\prescript{\nu}{}{R}_n:= R(Z_1,\ldots, Z_n)$. In order to obtain a probabilistic analysis of the Radix Sort algorithm, asymptotic properties of these random trees as $n\to \infty$ have been considered for the {\em symmetric Bernoulli} or {\em unbiased memoryless source model}, where $\nu$ is the fair coin tossing measure, e.g. in 
\cite{MR1140708} ch. 5 and \cite {MR3077154} \textsection 5.2.2., and for more general inputs of random strings in \cite{MR1816272}. The  {\em density model}, where $\nu$ is the image under the binary expansion of an absolutely continuous probability measure on $[0,1]$, was considered in \cite{MR1161060}. {\em Dynamical sources} appear in \cite{MR1887308}; these include {\em Markovian inputs}, where $\nu$ is the shift-invariant distribution of a  Markov chain, see \cite{JaSp}, \cite{LNS}.

In this paper we analyze the tree-valued Markov chains $(\prescript{\nu}{}{R}_n)_{n\in \mathbb N}$  from a more synoptic point of view. We show that any such chain is a harmonic transform of the Markov chain $(\prescript{\gamma}{}{R}_n)_{n\in \mathbb N}$, with $\gamma$ the fair coin-tossing measure, and we  prove that the family $(\prescript{\nu}{}{R}_n)_{n\in \mathbb N}$ as $\nu$ varies constitute the full Doob--Martin boundary of  $(\prescript{\gamma}{}{R}_n)_{n\in \mathbb N}$. Loosely speaking, this means that all consistent ways of conditioning a chain of radix sort trees  ``in the large'' are described by precisely the family  $(\prescript{\nu}{}{R}_n)_{n\in \mathbb N}$.

In order to state our main result more formally, we first fix some notation.  Denote by $\{0,1\}^\star:=\bigsqcup_{k=0}^\infty\{0,1\}^k$ the set of finite tuples
or {\em words} drawn from the alphabet $\{0,1\}$ (with the empty word $\emptyset$
allowed) -- the symbol $\bigsqcup$ emphasizes that this is a disjoint union.  
Write an $\ell$-tuple $v=(v_1, \ldots, v_\ell) \in \{0,1\}^\star$ more simply as
$v_1 \ldots v_\ell$ and set $|v| = \ell$.  Define a directed graph with vertex set
$\{0,1\}^\star$ by declaring that if
$u= u_1 \ldots u_k$ and $v=v_1 \ldots v_\ell$ are two words,
then $(u,v)$ is a directed edge (that is, $u \rightarrow v$)
if and only if $\ell=k+1$ and $u_i=v_i$
for $i=1,\ldots,k$.  Call this directed graph
the {\em complete rooted binary tree}.  Say that $u < v$ for
two words $u= u_1 \ldots u_k$ and $v=v_1 \ldots v_\ell$ in $\{0,1\}^\star$ if
$k < \ell$ and $u_1 \ldots u_k = v_1 \ldots v_k$;
that is, $u < v$ if there exist words 
$w_0, w_1, \ldots, w_{\ell-k}$ with
$u = w_0 \to w_1 \to \ldots \to w_{\ell-k} = v$.
This partial order extends to $\{0,1\}^\star \sqcup \{0,1\}^\infty$ in the obvious
way: if $u \in \{0,1\}^\star$ and $v \in \{0,1\}^\infty$, then $u < v$
when $u = u_1 \ldots u_k$ and $v=v_1 v_2 \ldots$ with $u_1 \ldots u_k = v_1 \ldots v_k$
(and no two elements of $\{0,1\}^\infty$ are comparable).
It will be convenient to introduce the notation  
$\tau(y) := \{z \in \{0,1\}^\infty : y < z\}$ 
for $y \in \{0,1\}^\star$. 

A {\em finite rooted binary tree}
is a non-empty subset $\tt$ of 
$\{0,1\}^\star$ with the property that if $v \in \tt$
and $u \in \{0,1\}^\star$ is such that $u \rightarrow v$, then $u \in \tt$.
The vertex $\emptyset$ (that is, the empty word)
belongs to any such tree $\tt$ and
is the {\em root} of $\tt$. The {\em leaves} of $\tt$ are the elements $v \in \tt$
such that if $v \to w$, then $w \notin \tt$, and we use the notation
$\LL(\tt)$ for the leaves of $\tt$. A finite rooted binary tree is uniquely determined by its
leaves: it is the smallest rooted binary tree that contains the set of leaves and it
consists of the leaves and the points $u \in \{0,1\}^\star$ such that 
$u < v$ for some leaf $v$. In general, write
\[
\TT(y_1, \ldots, y_m) := \bigcup_{j=1}^m \{u \in \{0,1\}^\star : u \le y_j\}
\]
for the smallest finite rooted binary tree
containing $y_1, \ldots, y_m \in \{0,1\}^\star$; the leaves of this tree form a subset
of $\{y_1, \ldots, y_m\}$ and this subset is proper if and only if $y_i < y_j$ for some
pair $1 \le i \ne j \le m$.

A collection $z_1, \ldots, z_n$ of distinct elements of $\{0,1\}^\infty$ determines
a finite rooted binary tree in the following manner. 
For $n = 1$, put $H_{1,1}(z_1) := 0$ and $\zeta_{1,1}(z_1) := \emptyset$.  
For $n \ge 2$ and $1 \le j \le n$, let 
\[
H_{n,j}(z_1, \ldots, z_n) := \min\{\ell : (z_{j,1}, \ldots, z_{j,\ell}) \ne (z_{k,1}, \ldots, z_{k,\ell}), \; k \ne j\}
\]
be the minimal length at which a prefix of $z_j$ differs from the
prefixes of the same length of all the other $z_k$, $k \neq j$,  
and denote the corresponding prefix by 
\begin{align}\label{zeta}
\zeta_{n,j}(z_1, \ldots, z_n) := (z_{j,1}, \ldots, z_{j,H_{n,j}(z_1, \ldots, z_n)}) \in \{0,1\}^\star, \quad 1 \le j \le n.
\end{align}

The words $\zeta_{n,j}(z_1, \ldots, z_n)$, $1 \le j \le n$, are distinct
and $\zeta_{n,j}(z_1, \ldots, z_n) < z_j$ for $1 \le j  \le n$.
Note that if $\sigma$ is a permutation of $[n] := \{1,\ldots,n\}$, then
\begin{equation}
\label{symmetric_tree_build}
\zeta_{n,\sigma(j)}(z_{\sigma(1)}, \ldots, z_{\sigma(n)}) = \zeta_{n,j}(z_1, \ldots, z_n).
\end{equation}

The {\em radix sort tree determined by the input} $z_1, \ldots, z_n$ is defined as
\[
\RR(z_1, \ldots, z_n) := \TT(\zeta_{n,1}(z_1, \ldots, z_n), \ldots, \zeta_{n,n}(z_1, \ldots, z_n)).
\]
Thus, $\RR(z_1, \ldots, z_n)$ is the finite rooted binary tree whose $n$ leaves are coded by the $n$
finite strings of \eqref{zeta}.
Observe that 
\begin{equation}
\label{permutation_invariance}
\RR(z_1, \ldots, z_n) = \RR(z_{\sigma(1)}, \ldots, z_{\sigma(n)})
\end{equation}
for any permutation $\sigma$ of $[n]$.

Let $Z_1, Z_2, \ldots$ be i.i.d. $\{0,1\}^\infty$-valued random variables 
with common distribution
some diffuse probability measure $\nu$.  Then $Z_1, Z_2,\ldots$ are a.s. pairwise distinct, and on this event we
set $\prescript{\nu}{}{R}_n := \RR(Z_1, \ldots, Z_n)$.   
When $\nu$ is fair coin-tossing measure $\gamma$ (that is, $\gamma$ is the infinite product
of the uniform measure on $\{0,1\}$), we drop the~$\nu$ and simply write $R_n$ 
for $\prescript{\gamma}{}{R}_n$.  It is not hard to see that
$(\prescript{\nu}{}{R}_n)_{n \in \bN}$ is a Markov chain; we
call it a {\em radix sort tree chain}.  

Note for $y \in \{0,1\}^*$ and $n\ge k \ge 2$ that with probability one
\[
\#\{1 \le j \le n : y \le \zeta_{n,j}(Z_1, \ldots, Z_n)\} = k
\]
if and only if
\[
\#\{1 \le j \le n : Z_j \in \tau(y)\} = k,
\]
Thus,
\[
\nu(\tau(y)) 
= 
\lim_{n \to \infty} \frac{1}{n} \#\{1 \le j \le n : y \le \zeta_{n,j}(Z_1, \ldots, Z_n)\} \quad \bP-\text{a.s.}
\]
and $\nu$ can be recovered almost surely from the tail
$\sigma$-field of $(\prescript{\nu}{}{R}_n)_{n \in \bN}$;
in particular, different choices of $\nu$ result in different
distributions for $(\prescript{\nu}{}{R}_n)_{n \in \bN}$.
It follows from \eqref{permutation_invariance} and the Hewitt--Savage zero--one law that
the tail $\sigma$-field of $(\prescript{\nu}{}{R}_n)_{n \in \bN}$ is $\bP$-a.s. trivial.

In order to describe our results, we need to use some notions and facts
from Doob--Martin boundary theory.  A quick summary tailored to the sort of
setting we are in of a process  which ``goes off to infinity''
and never revisits states may be found in \cite{MR2869248, Remy}, where there are also
references to expositions of the general theory for arbitrary transient
Markov chains following on from the seminal paper \cite{MR0107098}.
Analyses of binary-search-tree and digitial-search-tree
chains from the Doob--Martin point of view are presented in \cite{MR2869248}.

Let $\bS_n$ be the set of trees that can arise as $\RR(z_1, \ldots, z_n)$
for some choice of $z_1, \ldots, z_n$ and set $\bS = \bigsqcup_{n \in \bN} \bS_n$. Of course, $\bS_1 = \{\emptyset\}$.
For $n \ge 2$, a finite rooted binary tree $\tt$ with $n$ leaves
belongs to $\bS_n$ if and only if whenever $u_1 u_2 \ldots u_{m-1} u_m \in \LL(\tt)$,
then $u_1 u_2 \ldots u_{m-1} \bar u_m \in \tt$, where $\bar 0 := 1$
and $\bar 1 := 0$.

Given a binary tree $\tt \in \bS$ with $M(\tt)$ leaves (that is, $\tt \in \bS_{M(\tt)}$), write
$ R_1^\tt, R_2^\tt, \ldots, R_{M(\tt)}^{\tt}$ for the {\em bridge} process
obtained by conditioning $R_1, \ldots, R_{M(\tt)}$
on the event $\{R_{M(\tt)} = \tt\}$.  This Markov chain has the same
backward transition probabilities as $(R_n)_{n \in \bN}$; that is,
\[
\bP\{{R}_n^\tt = \rr \, | \, {R}_{n+1}^\tt = \ss\}
=
\bP\{{R}_n = \rr \, | \, {R}_{n+1} = \ss\}
\]
for $n+1 \le M(\tt)$.

An {\em infinite bridge} for $(R_n)_{n \in \bN}$
is a Markov chain $(R_n^\infty)_{n \in \bN}$ with $R_n^\infty \in \bS_n$
for $n \in \bN$
and the same backward transition
probabilities as $(R_n)_{n \in \bN}$.  We show in Sec.~\ref{S:harmonic_examples} that
each chain $(\prescript{\nu}{}{R}_n)_{n \in \bN}$ is an infinite bridge for
$(R_n)_{n \in \bN}$.  Any infinite bridge is a
Doob $h$-transform of $(R_n)_{n \in \bN}$; that is, it has forward transition
probabilities of the form
\[
\bP\{R_{n+1}^\infty = \tt \, | \, R_n^\infty = \ss\}
=
h(\ss)^{-1} \bP\{R_{n+1} = \tt \, | \, R_n = \ss\} h(\tt),
\]
where the nonnegative function $h$ is given up to a constant multiple by
\[
h(\tt) = \frac{\bP\{R_n^\infty = \tt\}}{\bP\{R_n = \tt\}}.
\]
The function $h$ is harmonic for $(R_n)_{n \in \bN}$; that is,
\[
\sum_\tt \bP\{R_{n+1} = \tt \, | \, R_n = \ss\} h(\tt) = h(\ss).
\]
Conversely, any Markov chain with initial state the trivial
tree $\emptyset$ 
and transition probabilites that arise from those of $(R_n)_{n \in \bN}$
 through the $h$-transform construction 
for some nonnegative harmonic function $h$ 
(normalized, without loss of generality, so that $h(\emptyset) = 1$) 
is an infinite bridge.

The distribution of an infinite bridge is a mixture of distributions of
infinite bridges with almost surely trivial tail $\sigma$-fields.  Equivalently,
the collection of nonnegative harmonic functions $h$ with $h(\emptyset) = 1$ is a compact convex
set (for the product topology on $\bR_+^\bS$)
and any such function is a unique convex combination of the extreme points
of this set. In particular, there is a bijective correspondence between the extreme points of these two sets; that is between the set of infinite bridges with
trivial tail $\sigma$-fields and extremal normalized nonnegative harmonic
functions.

One way to construct infinite bridges
is to look for sequences $(\tt_k)_{k \in \bN}$ with $M(\tt_k) \to \infty$
as $k \to \infty$ such that initial segments of the finite bridges
$(R_1^{\tt_k}, R_2^{\tt_k}, \ldots, R_{M({\tt_k})}^{\tt_k})$
converge in distribution as $k \to \infty$.  
A necessary condition for an infinite bridge to have an 
almost surely trivial tail $\sigma$-field is that is arises from such a construction.

The nonnegative harmonic function corresponding to an infinite bridge constructed in this way
(normalized to have $h(\emptyset)=1$) is
\begin{equation}
\label{harmonic_limit_Doob-Martin}
h(\ss) = \lim_{k \to \infty} K(\ss, \tt_k),
\end{equation}
where
\begin{equation}
\label{def_Doob-Martin_kernel}
K(\ss,\tt)
:=
\frac{\bP\{R_{M(\ss)}^{\tt} = \ss\}}{\bP\{R_{M(\ss)} = \ss\}}
=
\frac{\bP\{R_{M(\tt)} = \tt \, | \, R_{M(\ss)} = \ss\}}
{\bP\{R_{M(\tt)} = \tt\}}
\end{equation}
is the {\em Doob--Martin kernel}.
A necessary condition for a normalized nonnegative harmonic function to be an extreme point is that it arises as such a limit.

The following is our main result characterizing all the ways that it is possible
to condition the radix sort tree chain with
inputs distributed according to fair coin-tossing measure.
We prove this result in Section~\ref{S:T:summary_proof}.

\begin{theorem}
\label{T:summary}
An infinite bridge for the radix sort tree chain with inputs distributed according
to fair coin-tossing measure on $\{0,1\}^\infty$ has an almost surely trivial
tail $\sigma$-field if and only if it is a Markov chain with the same distribution as
the radix sort tree chain with inputs distributed according to some diffuse
probability measure on $\{0,1\}^\infty$.  Consequently, the distribution
of an infinite bridge for the radix sort tree chain with inputs distributed according
to fair coin-tossing measure is a unique mixture of distributions of 
radix sort tree chains with inputs distributed according to diffuse
probability measures on $\{0,1\}^\infty$.  Moreover, an infinite bridge
$(R_n^\infty)_{n \in \bN}$ has an almost surely trivial tail $\sigma$-field if and only if
there is a sequence $(\tt_k)_{k \in \bN}$ with $M(\tt_k) \to \infty$
as $k \to \infty$ such that for all $n \in \bN$ 
the initial segment $(R_1^{\tt_k}, \ldots, R_n^{\tt_k})$ converges in distribution
to $(R_1^\infty, \ldots, R_n^\infty)$ as $k \to \infty$.
\end{theorem}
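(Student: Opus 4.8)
The plan is to pin down the Doob--Martin boundary by computing the kernel $K$ in closed form from the exchangeability already built into the model, reading off all of its boundary limits, and feeding the outcome into the general dictionary between extremal nonnegative harmonic functions, infinite bridges with trivial tail $\sigma$-field, and representing measures on the boundary. \emph{Step 1: a closed form for $K$.} I would first record (it uses $\tt\in\bS$, and is likely available from the preceding sections) that conditionally on $\{R_{M(\tt)}=\tt\}$ the $M(\tt)$ inputs occupy the leaves of $\tt$ through a uniformly random bijection, with their continuations past the leaves being conditionally i.i.d.\ uniform; this follows from permutation invariance \eqref{permutation_invariance}. Hence for $\ss\in\bS_n$ with leaves $s_1,\dots,s_n$ and $n\le M(\tt)$, the tree $R_n^{\tt}$ is a deterministic function of the uniformly random $n$-subset $V\subseteq\LL(\tt)$ hit by the labels $1,\dots,n$, and $\RR$ of the elements of $V$ equals $\ss$ exactly when $\#\{v\in V:s_i\le v\}=1$ for every $i$. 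Writing $a(y,\tt):=\#\{v\in\LL(\tt):y\le v\}$, and using the same ``one input per leaf-slot'' count for i.i.d.\ fair-coin inputs,
\[
\bP\{R_n^{\tt}=\ss\}=\binom{M(\tt)}{n}^{-1}\prod_{i=1}^{n}a(s_i,\tt),\qquad
\bP\{R_n=\ss\}=n!\prod_{i=1}^{n}2^{-|s_i|},
\]
so that
\[
K(\ss,\tt)=2^{\sum_{i=1}^{n}|s_i|}\,\frac{(M(\tt)-n)!}{M(\tt)!}\,\prod_{i=1}^{n}a(s_i,\tt).
\]

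\emph{Step 2: the boundary limits are the $h_\nu$.} To each $\tt\in\bS$ I attach the empirical leaf measure $\mu_\tt:=M(\tt)^{-1}\sum_{v\in\LL(\tt)}\delta_{\hat v}$ on the compact metrizable space $\{0,1\}^\infty$, where $\hat v:=v000\ldots\in\{0,1\}^\infty$. Since $a(y,\tt)=M(\tt)\mu_\tt(\tau(y))+O(|y|)$ and $(M(\tt)-n)!/M(\tt)!=M(\tt)^{-n}(1+o(1))$ as $M(\tt)\to\infty$, and since each $\tau(y)$ is clopen, Step~1 gives $K(\ss,\tt)\to 2^{\sum_i|s_i|}\prod_i\nu(\tau(s_i))=:h_\nu(\ss)$ whenever $M(\tt)\to\infty$ and $\mu_\tt\to\nu$ weakly. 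Given a sequence $(\tt_k)$ with $M(\tt_k)\to\infty$ along which $K(\cdot,\tt_k)$ converges --- the only case relevant to \eqref{harmonic_limit_Doob-Martin} --- compactness of the space of probability measures on $\{0,1\}^\infty$ lets me pass to a subsequence with $\mu_{\tt_k}\to\nu$ weakly, and the limit is then forced to be $h_\nu$. Note $h_\nu(\ss)=\bP\{\prescript{\nu}{}{R}_n=\ss\}/\bP\{R_n=\ss\}$ by Step~1. Thus \emph{every} Doob--Martin boundary limit equals $h_\nu$ for some probability measure $\nu$ on $\{0,1\}^\infty$.

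\emph{Step 3: $h_\nu$ is harmonic iff $\nu$ is diffuse, and assembly.} For $\nu$ diffuse, $h_\nu$ is harmonic --- this is exactly the statement of Section~\ref{S:harmonic_examples} that $\prescript{\nu}{}{R}$ is an infinite bridge, with harmonic function $h_\nu$. Conversely, if $\nu$ has an atom, then decomposing a pair of i.i.d.\ $\nu$-samples by their longest common prefix gives
\[
\sum_{\tt\in\bS_2}\bP\{R_2=\tt\mid R_1=\emptyset\}\,h_\nu(\tt)
=2\sum_{w\in\{0,1\}^\star}\nu(\tau(w0))\,\nu(\tau(w1))
=1-\sum_{a}\nu(\{a\})^2<1=h_\nu(\emptyset),
\]
the sum over atoms $a$ of $\nu$, so $h_\nu$ is not harmonic. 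Now I assemble: by the theory recalled above, every extremal normalized nonnegative harmonic function is a boundary limit, hence by Steps~2--3 is $h_\nu$ for a diffuse $\nu$; conversely, each $\prescript{\nu}{}{R}$ with $\nu$ diffuse is an infinite bridge whose tail $\sigma$-field is $\bP$-a.s.\ trivial (Hewitt--Savage, as in the introduction), so $h_\nu$ is extremal, and distinct diffuse $\nu$ give distinct laws of $\prescript{\nu}{}{R}$, hence distinct $h_\nu$. Therefore the extreme points are precisely $\{h_\nu:\nu\text{ diffuse}\}$, in bijection with the diffuse probability measures on $\{0,1\}^\infty$, the infinite bridges with trivial tail are precisely the chains $\prescript{\nu}{}{R}$, and --- via $\bP\{R_1^\infty=\ss_1,\dots,R_n^\infty=\ss_n\}=h(\ss_n)\prod_i\bP\{R_{i+1}=\ss_{i+1}\mid R_i=\ss_i\}$ --- the unique Choquet--Martin representation of harmonic functions translates into the asserted unique mixture of laws of the $\prescript{\nu}{}{R}$.

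\emph{Step 4: the ``moreover'', and the main obstacle.} If the initial segments of $(R_n^{\tt_k})$ converge in distribution to those of an infinite bridge $R^\infty$, then the harmonic function of $R^\infty$ is $\lim_k K(\cdot,\tt_k)$, which by Step~2 is some $h_\nu$; being the harmonic function of a genuine bridge it is harmonic, so $\nu$ is diffuse and $R^\infty$ is the $h_\nu$-transform, namely $\prescript{\nu}{}{R}$, which has trivial tail. Conversely, if $R^\infty$ has trivial tail then $R^\infty=\prescript{\nu}{}{R}$ for a diffuse $\nu$; drawing $Z_1,Z_2,\dots$ i.i.d.\ $\nu$, the leaf-frequency computation of the introduction gives $\mu_{\RR(Z_1,\dots,Z_k)}\to\nu$ weakly $\bP$-a.s., so for a.e.\ realization the deterministic trees $\tt_k:=\RR(Z_1,\dots,Z_k)$ satisfy $K(\ss,\tt_k)\to h_\nu(\ss)$ for all $\ss$, whence (Scheff\'e's lemma, using $\sum_{\ss}\bP\{\prescript{\nu}{}{R}_n=\ss\}=1$ and the shared backward transitions) the initial segments $(R_1^{\tt_k},\dots,R_n^{\tt_k})$ converge in distribution to $(R_1^\infty,\dots,R_n^\infty)$. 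I expect the main obstacle to be Steps~2 and~3: establishing cleanly that the empirical leaf measure is the only asymptotically relevant statistic of a large tree, and that it must be non-atomic for the (always superharmonic) limit $h_\nu$ to be genuinely harmonic --- this dichotomy is exactly what forces the word ``diffuse'' into the statement.
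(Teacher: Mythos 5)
Your proposal is correct, but it reaches Theorem~\ref{T:summary} by a genuinely different route than the paper. The paper proves the key completeness/extremality statements (Theorem~\ref{T:main} and Corollary~\ref{C:main}) probabilistically: it attaches to any infinite bridge a consistently labeled version, shows the limiting leaves $(\langle i\rangle)_{i\in\bN}$ form an exchangeable sequence, invokes de~Finetti to equate tail triviality with the sequence being i.i.d.\ (with necessarily diffuse law), recovers the bridge pathwise as $R_n^\infty=\RR(\langle 1\rangle,\ldots,\langle n\rangle)$, and then proves that every kernel limit $\lim_k K(\cdot,\tt_k)$ is extremal via dissociation of the exchangeable array $\{\langle i\rangle\wedge\langle j\rangle\}$ (Aldous's criterion, Kallenberg Lemma~7.35) together with a sampling with/without replacement coupling. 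You instead identify the full boundary analytically: your Step~1 formula $\bP\{R_n^{\tt}=\ss\}=\binom{M(\tt)}{n}^{-1}\prod_i\#\{v\in\LL(\tt):s_i\le v\}$ agrees with the paper's Section~\ref{S:Doob-Martin_kernel} computation of $K$; your Step~2 uses compactness of the probability measures on $\{0,1\}^\infty$ to show \emph{every} limit of $K(\cdot,\tt_k)$ with $M(\tt_k)\to\infty$ is $\prescript{\nu}{}h$ for some $\nu$ (the paper only records the sufficient direction, in Remark~\ref{R:nuh_as_limit}); your Step~3 dichotomy (strict superharmonicity at $\emptyset$ when $\nu$ has an atom) is essentially the Riesz-decomposition computation the paper relegates to Section~\ref{S:excessive_examples} and never uses in the proof; and the assembly then rests on the same background facts the paper recalls in the introduction (tail-trivial bridges correspond bijectively to extremal normalized harmonic functions, extremal functions arise as kernel limits, Hewitt--Savage triviality for $\prescript{\nu}{}R$, and recoverability of $\nu$ from the law). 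Your Step~4 handling of the ``moreover'' clause, including the a.s.\ construction of $\tt_k=\RR(Z_1,\ldots,Z_k)$ and the reduction of initial-segment convergence to convergence of the $n$th marginal via the shared backward transitions, is sound. What each approach buys: yours is shorter and makes the full Doob--Martin boundary (not just the minimal one) explicit from the closed form of $K$, while the paper's labeled-bridge/de~Finetti argument yields more structure --- a pathwise representation of any tail-trivial bridge by an i.i.d.\ input sequence and a transparent identification of the mixing measure --- at the cost of the heavier exchangeable-array machinery.
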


The structure of the remainder of the paper is as follows.  In Sections \ref{S:forward_probs}, \ref{S:backward_probs},
and \ref{S:Doob-Martin_kernel} we obtain that forward transition probabilities, backward transition probabilities,
and Doob--Martin kernels of the radix sort tree chains.  In Section \ref{S:harmonic_examples} we show that each
radix sort tree chain $(\prescript{\nu}{}{R}_n)_{n \in \bN}$ is a Doob $h$-transform of the Markov chain $(R_n)_{n \in \bN}$.  We consider infinite bridges for the Markov chain $(R_n)_{n \in \bN}$ in Section \ref{S:labeled_infinite_bridges} and introduce an auxiliary consistent labeling of the leaves of the state of the bridge
at each time $n$ by $[n] := \{1,\ldots,n\}$ such that, intuitively, these labelings determine 
a labeling of the limit of the bridge at time $\infty$ and the whole bridge path can be recovered from the limit
and its labeling.  We prove two results, Theorem~\ref{T:main} and Corollarly~\ref{C:main}, 
in Section~\ref{S:T:summary_proof} that together establish Theorem~\ref{T:summary}.

\section{Forward transition probabilities}
\label{S:forward_probs}

Recall that $\bS_n$ is the set of trees that can arise as $\RR(z_1, \ldots, z_n)$
for some choice of distinct $z_1, \ldots, z_n \in \{0,1\}^\infty$.     It is clear that $\RR(z_1, \ldots, z_n)$ is the unique 
finite rooted binary tree $\tt \in \bS_n$ with the following property:  if $\LL(\tt) = \{y_1, \ldots, y_n\}$,
then there is a permutation $\pi$ of $[n]$ such that $z_i \in \tau(y_{\pi(i)})$ for
$i \in [n]$. 

For $n \in \bN$, the distribution of $\prescript{\nu}{}{R}_n$ is specified by
\[
\bP\{\prescript{\nu}{}{R}_1 = \emptyset\} = 1
\]
and, for $n \ge 2$ and $\tt \in \bS_n$ with $\{y_1, \ldots, y_n\} = \LL(\tt)$,
\[
\begin{split}
\bP\{\prescript{\nu}{}{R}_n = \tt\}
& =
\bP\{
\{\zeta_{n,1}(Z_1, \ldots, Z_n), \ldots, \zeta_{n,n}(Z_1, \ldots, Z_n)\} 
=
\{y_1, \ldots, y_n\}\}
\} \\
& = 
n! \prod_{k=1}^n \nu(\tau(y_k)). \\
\end{split}
\] 
In particular,
\begin{align} \label{margindist}
\bP\{{R}_n = \tt\}
=
n! \prod_{k=1}^n \gamma(\tau(y_k)) 
=
n! \prod_{k=1}^n 2^{-|y_k|}.
\end{align}

The  radix sort chain $(\prescript{\nu}{}{R}_n)_{n \in \bN}$ has the following 
forward transition dynamics.
Consider $\ss \in \bS_n$.
There are two classes of trees $\tt \in \bS_{n+1}$ such that 
$$\bP\{\prescript{\nu}{}{R}_{n+1} = \tt \, | \, \prescript{\nu}{}{R}_n = \ss\} > 0.$$

\noindent{\bf Case I.} Here $\tt \in \bS_{n+1}$ is a tree with
$\LL(\tt) = \LL(\ss) \sqcup \{w\}$, where 
$w = x\bar u_m$ for some $x = u_1 u_2 \ldots u_{m-1}$ with
$xu_m \in \ss \setminus \LL(\ss)$.
In this case,
\[
\bP\{\prescript{\nu}{}{R}_{n+1} = \tt \, | \, \prescript{\nu}{}{R}_n = \ss\} = \nu(\tau(w)).
\]
In particular, 
\begin{align}\label{attachtox}
p(\ss,\tt) := \bP\{R_{n+1} = \tt \, | \, R_n = \ss\} = 2^{-|w|} = 2^{-(|x|+1)}.
\end{align}
\noindent{\bf Case II.} Here $\tt \in \bS_{n+1}$ is a tree with
$\LL(\tt) = (\LL(\ss) \setminus \{y\}) \sqcup \{y',y''\}$, where 
$y = u_1 u_2 \ldots u_{m-1} u_m \in \LL(\ss)$,
$y' = u_1 u_2 \ldots u_{m-1} u_m v_1 \ldots v_p$ and 
$y'' = u_1 u_2 \ldots u_{m-1} u_m v_1 \ldots \bar v_p$ for 
some $p \ge 1$ and $v_1, \ldots, v_p \in \{0,1\}$.
In this case,
\[
\begin{split}
\bP\{\prescript{\nu}{}{R}_{n+1} = \tt \, | \, \prescript{\nu}{}{R}_n = \ss\} 
& = \nu(\tau(y')) \frac{\nu(\tau(y''))}{\nu(\tau(y))} + \nu(\tau(y'')) \frac{\nu(\tau(y'))}{\nu(\tau(y))} \\
& = 2 \frac{\nu(\tau(y')) \nu(\tau(y''))}{\nu(\tau(y))}.\\
\end{split}
\]
In particular,
\begin{equation}
\label{attachtoy}
p(\ss,\tt) := \bP\{R_{n+1} = \tt \, | \, R_n = \ss\}
= 2 \frac{2^{-|y'|} 2^{-|y''|}}{2^{-|y|}}.
\end{equation}
For later use we note that, with $\mathbf d:= \mathbf T(v_1 \ldots v_p, v_1 \ldots \bar v_p)$, this may be written as
\begin{align}\label{cherry}
p(\ss,\tt) = 2^{-|y|} \mathbb P\{R_2=\mathbf d\}.
\end{align}

\section{Backward transition probabilities}
\label{S:backward_probs}

Note that if $\ss \in \bS_n$ and $\tt \in \bS_{n+1}$ are such that 
$\bP\{\prescript{\nu}{}{R}_{n+1} = \tt \, | \, \prescript{\nu}{}{R}_n = \ss\} > 0$,
then the leaf set of $\ss$ is obtained either by removing a leaf from the leaf set of $\tt$
that has a sibling which is not a leaf  (corresponding
to Case I above), in which case \eqref{symmetric_tree_build} implies that 
\[
\bP\{\prescript{\nu}{}{R}_n = \ss \, | \, \prescript{\nu}{}{R}_{n+1} = \tt\} = \frac{1}{n+1},
\]
 or by removing two sibling leaves from the leaf set of $\tt$ and replacing them by a single new leaf
positioned at the start of the path that led from the rest of $\tt$ to their common parent
(corresponding to Case II above), in which case \eqref{symmetric_tree_build} implies that 
\[
\bP\{\prescript{\nu}{}{R}_n = \ss \, | \, \prescript{\nu}{}{R}_{n+1} = \tt\} = \frac{2}{n+1}.
\]
These backward transition probabilities can also be obtained directly.  
Again write $\LL(\ss) = \{y_1, \ldots, y_n\}$.  In Case I (using the notation
that was introduced to first describe this case),
\[
\begin{split}
\bP\{\prescript{\nu}{}{R}_n = \ss \, | \, \prescript{\nu}{}{R}_{n+1} = \tt\} 
& =
\frac{
\bP\{\prescript{\nu}{}{R}_n = \ss\}  \bP\{\prescript{\nu}{}{R}_{n+1} = \tt \, | \, \prescript{\nu}{}{R}_n = \ss\}
}
{
\bP\{\prescript{\nu}{}{R}_{n+1} = \tt\}
} \\
& =
\frac{
(n! \prod_{k=1}^n \nu(\tau(y_k))) \nu(\tau(w))
}
{
(n+1)! \prod_{k=1}^n \nu(\tau(y_k))) \nu(\tau(w))
} \\
& =
\frac{1}{n+1}. \\
\end{split}
\]
In Case II (also using the notation
that was introduced to first describe this case),
\[
\begin{split}
\bP\{\prescript{\nu}{}{R}_n = \ss \, | \, \prescript{\nu}{}{R}_{n+1} = \tt\} 
& =
\frac{
\bP\{\prescript{\nu}{}{R}_n = \ss\}  \bP\{\prescript{\nu}{}{R}_{n+1} = \tt \, | \, \prescript{\nu}{}{R}_n = \ss\}
}
{
\bP\{\prescript{\nu}{}{R}_{n+1} = \tt\}
} \\
& =
\frac{
(n! \prod_{k=1}^n \nu(\tau(y_k))) (2 \nu(\tau(y')) \nu(\tau(y'')) / \nu(\tau(y)))
}
{
(n+1)! (\prod_{1 \le k \le n} \nu(\tau(y_k)) / \nu(\tau(y))) \nu(\tau(y')) \nu(\tau(y''))
} \\
& =
\frac{2}{n+1}. \\
\end{split}
\]
The above observations are summarized in the following Definition and Remark.

\begin{definition}
\label{D:kappa}
Suppose that $\tt \in \bS_{n+1}$ and $v = v_1 \ldots v_m$ is a leaf of $\tt$.  If $v_1 \ldots \bar v_m$ 
is not a leaf of $\tt$, let $\kappa(\tt,v) \in \bS_n$ be the tree $\tt \backslash \{v\}$
(that is, $\kappa(\tt, v)$ is the tree with the same leaf set as  $\tt$ except that $v$ has been removed).
If $v_1 \ldots \bar v_m$ is also a leaf of $\tt$, then there is a largest $\ell < m$ such that
$v_1 \ldots v_\ell v_{\ell + 1}$ and $v_1 \ldots v_\ell \bar v_{\ell+1}$ are both  vertices of $\tt$,
and in this case let $\kappa(\tt,v) \in \bS_n$ be the tree 
$\tt \backslash (\{v_1 \ldots v_p : \ell < p \le m\} \cup \{v_1 \ldots \bar v_m\})$
(that is, $\kappa(\tt, v)$ is the tree with the same leaf set as  $\tt$ except that $v$ 
and its sibling leaf $v_1 \ldots \bar v_m$ have both been removed and replaced by 
the single leaf $v_1 \ldots v_\ell$).
\end{definition}  

\begin{remark} 
\label{R:kappa}
Using Definition~\ref{D:kappa}, we can then describe the backward evolution of 
$(\prescript{\nu}{}R_n)_{n \in \bN}$
by saying that conditional on $\{\prescript{\nu}{}R_{n+1}, \prescript{\nu}{}R_{n+2}, \ldots\}$ 
one of the $n+1$ leaves of $\prescript{\nu}{}R_{n+1}$ is chosen
uniformly at random and, denoting this leaf by $V_{n+1}$, the random tree 
$\prescript{\nu}{}R_n$ is constructed as
$\kappa(\prescript{\nu}{}R_{n+1}, V_{n+1})$.
\end{remark}

\section{The Doob-Martin kernel}
\label{S:Doob-Martin_kernel}

Suppose that $\ss \in \bS_m$ and $\tt \in \bS_{m+n}$ are such that $\bP\{R_{m+n} = \tt \, | \, R_m = \ss\} > 0$,
a state of affairs which we denote by $\ss \triangleleft \tt$.  Write $x_1, \ldots, x_p$ for the vertices of
$\ss$ that have degree $2$ and $y_1, \ldots, y_q$ for the leaves of $\ss$.  Of course, $q = m$, but it will
be clearer to use this alternative notation.  Then $\tt$ is obtained from $\ss$ by attaching
subtrees to some of the vertices $\{x_1, \ldots, x_p\} \cup \{y_1, \ldots, y_q\}$.  More precisely, 
$\tt \setminus \ss = (\bigsqcup_{i=1}^p \aa_i) \sqcup  (\bigsqcup_{j=1}^q \bb_j)$
where the subtrees $\aa_i$ and $\bb_j$ are as follows.  
Suppose that $x_i = x_{i1} \ldots x_{i f_i}$ and $u_i \in \{0,1\}$
is such that
$x_{i1} \ldots x_{i f_i} u_i = x_i u_i \notin \ss$, then either 
$\aa_i = \emptyset$ (that is, no subtree is attached to $x_i$, in which case we set $\alpha_i = 0$) or there is an $\alpha_i \ge 1$ and $\cc_i \in \bS_{\alpha_i}$
such that $\aa_i = \{x_i u_i w: w \in \cc_i\}$.
Suppose that $y_j = y_{j1} \ldots y_{j g_j}$, then either $\bb_j = \emptyset$ (that is, no subtree is attached to $y_j$, in which case we set $\beta_j = 0$)
or there is a $\beta_j \ge 1$ and $\dd_j \in \bS_{\beta_j + 1}$ such that
$\bb_j = \{y_j w : w \in \dd_j\} \setminus \{y_j\}$.  We have $n = \sum_i \alpha_i + \sum_j \beta_j$.
Given a tree $\rr \in \bS_h$ for some $h \in \bN$, set $M(\rr) = h$ (so that $M(\rr)$ is the number of leaves of $\rr$) and $\pi(\rr) := \bP\{R_h = \rr\}$.

Then, 
by iterating the arguments that lead to \eqref{attachtox} and \eqref{cherry}, 
\[
\begin{split}
&\bP\{R_{m+n} = \tt \; | \; R_m = \ss\} \\
& \quad =
\frac{n!} {\prod_i \alpha_i! \prod_j \beta_j!} 
\prod_i (2^{-(|x_i|+1)})^{\alpha_i}
\prod_j (2^{-|y_j|})^{\beta_j}
\prod_{\alpha_i \ne 0} \pi(\cc_i)
\prod_{\beta_j \ne 0} \pi(\dd_j). \\
\end{split}
\]
Also, because of \eqref{margindist},
\[
\begin{split}
& \bP\{R_{m+n} = \tt\} =\frac{(m+n)!} {\prod_i \alpha_i! \prod_j \beta_j!} \times\\
& \qquad \qquad \times \prod_{\alpha_i \ne 0} (2^{-(|x_i|+1)})^{\alpha_i} \frac{1}{\alpha_i!}\pi(\cc_i)
\prod_{\beta_j = 0} (2^{-|y_j|})  
\prod_{\beta_j \ne 0} (2^{-|y_j|})^{(\beta_j +1)} \frac{1}{(\beta_j+1)!}\pi(\dd_j) \\
& \quad =
(m+n)!
\prod_i (2^{-(|x_i|+1)})^{\alpha_i}
\prod_j (2^{-|y_j|})^{(\beta_j +1)}
\prod_{\alpha_i \ne 0} \pi(\cc_i)
\prod_{\beta_j \ne 0} \pi(\dd_j).\\
\end{split}
\]
Note also, that
\[
\ss \triangleleft \tt
\Longleftrightarrow
\{v : v \in \LL(\tt), \; y_j \le v\}
\ne \emptyset, \; 1 \le j \le m.
\]
Therefore, the Doob-Martin kernel is
\[
\begin{split}
K(\ss,\tt)
& = \frac{\bP\{R_{m+n} = \tt \; | \; R_m = \ss\}}
{\bP\{R_{m+n} = \tt\}} \\
& =
\frac{\prod_{j=1}^m (\beta_j+1)}{(n+1) \cdots (n+m)} \\
& =
\ind\{\ss \triangleleft \tt\}
\frac{\prod_{j=1}^m 2^{|y_j|} \#\{v : v \in \LL(\tt), \; y_j \le v\}}
{M(\tt) (M(\tt) - 1) \cdots (M(\tt) - m + 1)} \\
& = 
\frac{\prod_{j=1}^m 2^{|y_j|} \#\{v : v \in \LL(\tt), \; y_j \le v\}}
{M(\tt) (M(\tt) - 1) \cdots (M(\tt) - m + 1)}. \\
\end{split}
\]

\begin{remark}
\label{R:nuh_as_limit}
It follows that,  for 
$\ss \in \bS_m$, $m \in \bN$ with leaves $\LL(\ss) = \{y_1, \ldots, y_m\}$
and a sequence $(\tt_n)_{n \in \bN}$ with 
$\lim_{n \to \infty} M(\tt_n) = \infty$, the sequence $K(\ss,\tt_n)$ converges as $n \to \infty$ if and only if 
the limit of
\begin{equation}
\label{Doob-Martin_conv_cond}
\begin{split}
\prod_{j=1}^m
\left[\frac{\#\{v : v \in \LL(\tt_n), \; y_j \le v\}}
{M(\tt_n)}
\bigg / \gamma(\tau(y_j)) \right]\\
\end{split}
\end{equation}
exists, in which case the limits coincide.  Recall that for
$y \in \{0,1\}^\star$ the cardinality
$\#\{1 \le j \le n : y \le \zeta_{n,j}(z_1, \ldots, z_n)\}$ equals
$\#\{1 \le j \le n : y \le z_j\}$ if the latter cardinality is at least two
and it is zero otherwise.
Hence a sufficient condition for the limit as $n \to \infty$ of
$K(\ss,\tt_n)$ (equivalently, of \eqref{Doob-Martin_conv_cond}) to exist 
for all $\ss \in \bS$ is that 
$\tt_n = \RR(z_1, \ldots, z_n)$ for a sequence $(z_n)_{n \in \bN}$
of distinct elements of $\{0,1\}^\infty$
such that for some probability measure $\nu$ on $\{0,1\}^\infty$
we have 
\[
\nu\{z \in \{0,1\}^\infty : y \le z\}
=
\lim_{n \to \infty} \frac{1}{n} \#\{1 \le j \le n: y \le z_j\}
\]
for all $y \in \{0,1\}^\star$; that is, the sequence of empirical probability
distributions $(\frac{1}{n} \sum_{j=1}^n \delta_{z_j})_{n \in \bN}$ converges weakly
to $\nu$ (where we put the usual topology on $\{0,1\}^\infty$ for which
the sets $\tau(y)$ are both closed and open).  In this case 
\begin{equation}
\label{defnuh}
\lim_{n \to \infty} K(\ss,\tt_n)
=
\prescript{\nu}{}h(\ss)
:=
\prod_{a \in \LL(\ss)} \frac{\nu(\tau(a))}{\gamma(\tau(a))}.
\end{equation}
The function $\prescript{\nu}{}h$ is excessive as a pointwise limit of excessive
functions.  Moreover, if $\nu$ is diffuse, then
\[
\lim_{n \to \infty} K(\ss, \prescript{\nu}{}{R}_n) = \prescript{\nu}{}h(\ss), 
\quad \bP-\text{a.s.}
\]
for all $\ss \in \bS$.
\end{remark}

\section{Examples of harmonic functions}
\label{S:harmonic_examples}


It is immediate from the expressions for the forward transition probabilities
derived in Section~\ref{S:forward_probs} that
\[
\bP\{\prescript{\nu}{}{R}_{n+1} = \tt \, | \, \prescript{\nu}{}{R}_n = \ss\}  
=
\prescript{\nu}{}h(\ss)^{-1}
\bP\{{R}_{n+1} = \tt \, | \, {R}_n = \ss\}
\prescript{\nu}{}h(\tt),
\]
where the function $\prescript{\nu}{}h$ was defined in \eqref{defnuh}.

Thus, the nonnegative function $\prescript{\nu}{}h$ is harmonic,
the Markov chain
$(\prescript{\nu}{}{R}_n)_{n \in \bN}$ is the $h$-transform of
$(R_n)_{n \in \bN}$ with the harmonic function $\prescript{\nu}{}h$,
and hence $(\prescript{\nu}{}{R}_n)_{n \in \bN}$ 
is an infinite bridge for $(R_n)_{n \in \bN}$.
Recall that the tail $\sigma$-field of $(\prescript{\nu}{}{R}_n)_{n \in \bN}$
is $\bP$-a.s. trivial.  It follows that the normalized nonnegative
harmonic function $\prescript{\nu}{}h$
is extremal.  We show in Theorem~\ref{T:main} and Corollary~\ref{C:main} that
the extremal normalized nonnegative harmonic functions are precisely those
of this form and that they are, in turn, precisely the harmonic functions that arise as a limit of the form
$\rr \mapsto \lim_{k \to \infty} K(\rr, \tt_k)$, where $(\tt_k)_{k \in \bN}$ is such that
$M(\tt_k) \to \infty$ as $k \to \infty$.  In the language of Doob--Martin theory, this shows that the  
the {\em minimal Doob--Martin boundary} of the radix sort tree chain 
$(R_n)_{n \in \bN}$ coincides with the full {\em Doob--Martin boundary}.  
It may be feasible to prove this fact ``bare--hands'',
but the simpler indirect route we take is, we believe, more informative.

\section{Labeled infinite bridges}
\label{S:labeled_infinite_bridges}

Recall that the backward transition dynamics of
any finite bridge $(R_n^\tt)_{n=1}^{M(\tt)}$ 
and any infinite bridge $(R_n^\infty)_{n \in \bN}$
may be described in terms of the ``pruning'' operation $\kappa$ from
Definition~\ref{D:kappa} and Remark~\ref{R:kappa}:
\begin{itemize}
\item
Suppose that the value of the process at time $n+1$ is $\tt \in \bS_{n+1}$.
\item
Pick a leaf $v$ uniformly at random.
\item
Replace $\tt$ by $\kappa(\tt,v) \in \bS_n$ to produce the value of the process at time $n$.
\end{itemize}

Consider a binary tree $\tt'' \in \bS_{n+1}$.  Label the $n+1$ leaves
of $\tt''$ with $[n+1]$ uniformly at random (that is, all $(n+1)!$
labelings are equally likely).  Let $V$ be the leaf labeled $n+1$.
Set $\tt'  := \kappa(\tt'',V)$.  If the sibling of $V$ was not a leaf
in $\tt''$, then the leaves of $\tt'$ were also leaves of $\tt''$ and
we maintain their labels.  If the sibling of $V$ was also a leaf
of $\tt''$, labeled, say, $k \in [n]$, then in passing from
$\tt''$ to $\tt'$ we remove $V$ and its sibling along with some
vertices on the path leading to their parent, thereby creating
a new leaf which we label $k$ while leaving the labels of
the remaining leaves (which are common to both $\tt''$ and $\tt'$)
unchanged.
The distribution of $\tt'$ is that arising from
one step starting from $\tt''$
of the backward radix sort dynamics (that is, the common backward
dynamics of all infinite bridges). Moreover, the labeling of $\tt'$ 
by $[n]$ is uniformly distributed over the $n!$ possible labelings.

Now suppose that $(R_n^\infty)_{n \in \bN}$ is an infinite bridge.
For some $N$, let $S_N$ be a random binary tree with the same
distribution as $R_N^\infty$.  Label $S_N$ 
uniformly at random with $[N]$ to produce a leaf-labeled binary tree
$\tilde S_N$. The pruning procedure described above is deterministic once
the labeling is given and applying it successively for 
$n=N-1, \ldots, 1$ produces leaf-labeled binary trees 
$\tilde S_{N-1}, \ldots, \tilde S_1$, where $\tilde S_n$ has $n$
leaves labeled by $[n]$ for $1 \le n \le N-1$.  
Write $S_n$ for the underlying binary tree obtained by removing
the labels of $\tilde S_n$.
It follows from the observations above that the sequence $(S_1, \ldots, S_N)$
has the same joint distribution as $(R_1^\infty, \ldots, R_N^\infty)$.
Note that the joint distribution of the sequence
$(\tilde S_1, \ldots, \tilde S_N)$ is uniquely
determined by the distribution of $R_N^\infty$ and hence, 
{\em a fortiori}, by the joint distribution of $(R_n^\infty)_{n \in \bN}$.
Note also that if we perform this construction for two different values
of $N$, say $N' < N''$, to produce, with the obvious notation, sequences
$(\tilde S_1', \ldots, \tilde S_{N'}')$
and
$(\tilde S_1'', \ldots, \tilde S_{N''}'')$,
then
$(\tilde S_1', \ldots, \tilde S_{N'}')$
has the same joint distribution as
$(\tilde S_1'', \ldots, \tilde S_{N'}'')$.

By Kolmogorov's extension theorem we may therefore
suppose that there is a Markov process
$(\tilde R_n^\infty)_{n \in \bN}$
such that for each $n \in \bN$ 
the random element $\tilde R_n$ is a leaf-labeled binary
tree with $n$ leaves labeled by $[n]$
and the following hold.
\begin{itemize}
\item
The binary tree obtained by removing the labels of $\tilde R_n$
is $R_n$.
\item
For every $n \in \bN$, the conditional distribution
of $\tilde R_n^\infty$ given $R_n^\infty$ is uniform over the
$n!$ possible labelings of $R_n^\infty$.
\item
In going backward from time $n+1$ to time $n$,
$\tilde R_{n+1}^\infty$ is transformed into $\tilde R_n^\infty$
according to the deterministic procedure described above.
\end{itemize}

The distribution of the labeled infinite bridge
$(\tilde R_n)_{n \in \bN}$
is uniquely specified by the distribution
of $(R_n^\infty)_{n \in \bN}$
and the above requirements. 
Because of this distributional uniqueness, we refer to 
$(\tilde R_n^\infty)_{n \in \bN}$ as {\bf the}
{\em labeled version} of $(R_n^\infty)_{n \in \bN}$
and $(R_n^\infty)_{n \in \bN}$ as the {\em unlabeled
version} of $(\tilde R_n^\infty)_{n \in \bN}$
and speak of the ``leaf of $R_n^\infty$ labeled with $i \in [n]$ in $\tilde R_n^\infty$.''

\begin{definition}\label{labeling}
Given $i \in [n]$, let $\langle i \rangle_n \in \{0,1\}^\star$ be the leaf of $R_n^\infty$ 
labeled $i$ in $\tilde R_n^\infty$.  Observe that $\langle i \rangle_i \le \langle i \rangle_{i+1} \le \ldots$
and so $\langle i \rangle_\infty = \lim_{n \to \infty} \langle i \rangle_n \in \{0,1\}^\star \sqcup \{0,1\}^\infty$
is well-defined.  Moreover, for distinct $i,j \in \bN$,
$\langle i \rangle_n \wedge \langle j \rangle_n$ is the same for all $n \ge i \wedge j$
and coincides with $\langle i \rangle_\infty \wedge \langle j \rangle_\infty$. 
\end{definition}

\begin{remark}
We have $R_1^\infty \subset R_2^\infty \subset \ldots$ and
\[
R_\infty^\infty := \bigcup_{n \in \bN} R_n^\infty
=
\bigcup_{i \in \bN} 
\{v \in \{0,1\}^\star \sqcup \{0,1\}^\infty : v \le \langle i \rangle_\infty\}.
\]
That is, $R_\infty^\infty$ is the subtree of $\{0,1\}^\star \sqcup \{0,1\}^\infty$
with leaves $\{\langle i \rangle_\infty: i \in \bN\}$ and we define $\tilde R_\infty^\infty$
to be the tree $R_\infty^\infty$ with the leaf $\langle i \rangle_\infty$ labeled $i$, $i \in \bN$.
We will drop the subscripts and write $\langle i \rangle$ for $\langle i \rangle_\infty$, $i \in \bN$.
\end{remark}

\section{Proof of Theorem~\ref{T:summary}}
\label{S:T:summary_proof}

Theorem~\ref{T:summary} is an immediate consequence of Theorem~\ref{T:main} and
Corollary~\ref{C:main} below.

\begin{theorem}
\label{T:main}
Consider an infinite bridge $(R_n^\infty)_{n \in \bN}$ and its associated
labeled version $(\tilde R_n^\infty)_{n \in \bN}$.
\begin{itemize}
\item[(a)]
The sequence $(\langle i \rangle)_{i \in \bN}$ is exchangeable.
\item[(b)]
The tail $\sigma$-field of $(R_n^\infty)_{n \in \bN}$ is $\bP$-a.s.
trivial if and only if $(\langle i \rangle)_{i \in \bN}$ is an independent identically
distributed sequence.
\item[(c)]
If $(\langle i \rangle)_{i \in \bN}$ is  independent and identically
distributed with common distribution $\nu$, then $\nu$ is concentrated on
$\{0,1\}^\infty$ and diffuse.
\item[(d)]
The tail $\sigma$-field of $(R_n^\infty)_{n \in \bN}$ is $\bP$-a.s.
trivial if and only if $(R_n^\infty)_{n \in \bN}$ has the same distribution
as $(\prescript{\nu}{}R_n)_{n \in \bN}$ for some diffuse probability measure
$\nu$ on $\{0,1\}^\infty$.
\end{itemize}
\end{theorem}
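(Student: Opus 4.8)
The plan is to establish parts (a)--(d) essentially in the stated order, with the labeled bridge construction of Section~\ref{S:labeled_infinite_bridges} doing the heavy lifting. For part~(a), I would argue that exchangeability of $(\langle i\rangle)_{i\in\bN}$ is inherited from the uniform labeling at each finite stage: conditional on $R_N^\infty$, the labeling $\tilde R_N^\infty$ is uniform over the $N!$ leaf-labelings, and the backward pruning map is equivariant with respect to relabeling. More precisely, for any permutation $\sigma$ of $[N]$, applying $\sigma$ to the labels of $\tilde R_N^\infty$ and then running the deterministic pruning produces a labeled chain with the same law; taking $N\to\infty$ and using that $\langle i\rangle_n$ stabilizes for $n\ge i$ (Definition~\ref{labeling}) gives that $(\langle 1\rangle,\ldots,\langle N\rangle)$ is exchangeable for every $N$, hence the whole sequence is. I would want to be a little careful that the permutation acts on a fixed finite initial block while the labels $> N$ are untouched, so that the stabilized values $\langle i\rangle$ for $i\le N$ are genuinely permuted; this is where \eqref{symmetric_tree_build} and \eqref{permutation_invariance} get used.

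For part~(b), the key observation is that the entire path $(\tilde R_n^\infty)_{n\in\bN}$ is a deterministic function of $\tilde R_\infty^\infty$ --- equivalently of the sequence $(\langle i\rangle)_{i\in\bN}$ --- since $R_n^\infty = \TT(\langle 1\rangle_n,\ldots,\langle n\rangle_n)$ and the $\langle i\rangle_n$ are recovered from $(\langle i\rangle)_{i\in\bN}$ by the pruning rule. Hence the tail $\sigma$-field of $(R_n^\infty)_{n\in\bN}$ is (up to null sets) contained in, and by the recovery statement actually equals, the exchangeable $\sigma$-field of $(\langle i\rangle)_{i\in\bN}$; I would invoke de~Finetti together with the Hewitt--Savage zero--one law exactly as in the analogous discussion for $(\prescript{\nu}{}R_n)$ in the introduction. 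Triviality of the tail then forces the de~Finetti mixing measure to be degenerate, i.e. $(\langle i\rangle)_{i\in\bN}$ i.i.d.; conversely i.i.d. plus Hewitt--Savage gives a trivial exchangeable $\sigma$-field and hence a trivial tail. The one point requiring attention is that ``tail of $(R_n^\infty)$'' and ``tail of $(\tilde R_n^\infty)$'' and ``exchangeable $\sigma$-field of $(\langle i\rangle)$'' all coincide modulo null sets; this should follow from the deterministic mutual measurability just described, but it is the technical crux of (b).

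For part~(c), suppose $(\langle i\rangle)_{i\in\bN}$ is i.i.d. with law $\nu$ on $\{0,1\}^\star\sqcup\{0,1\}^\infty$. First, $\nu$ gives no mass to any fixed $y\in\{0,1\}^\star$: if $\nu(\{y\})>0$ then with probability one infinitely many $\langle i\rangle$ equal $y$, but the $\langle i\rangle$ are the leaves of the tree $R_\infty^\infty$ and hence pairwise incomparable and in particular distinct, a contradiction; so $\nu$ is concentrated on $\{0,1\}^\infty$. Second, $\nu$ is diffuse: if $\nu(\{z\})>0$ for some $z\in\{0,1\}^\infty$ then again infinitely many $\langle i\rangle$ equal $z$, contradicting distinctness of the leaves. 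Then part~(d) follows by combining (b), (c) and the finite-dimensional identification: when the $\langle i\rangle$ are i.i.d. $\nu$ with $\nu$ diffuse, the forward law of $(R_n^\infty)_{n\in\bN}$, read off from $R_n^\infty=\RR(\langle 1\rangle_n,\ldots,\langle n\rangle_n)$, matches the formula $\bP\{\prescript{\nu}{}R_n=\tt\}=n!\prod_{k}\nu(\tau(y_k))$ from Section~\ref{S:forward_probs}, so $(R_n^\infty)$ and $(\prescript{\nu}{}R_n)$ have the same law; the converse direction is immediate since $(\prescript{\nu}{}R_n)$ was already shown to have trivial tail. I expect the main obstacle to be the careful bookkeeping in (b) --- pinning down that the pruning dynamics let one reconstruct the whole labeled path from the limiting labeled tree, so that the three $\sigma$-fields genuinely agree --- rather than anything in (a), (c) or (d), which are comparatively soft once the labeled-bridge machinery is in place.
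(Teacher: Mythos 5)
Your parts (a), (c) and (d) are essentially the paper's argument: finite exchangeability of the uniform labelings plus stabilization of $\langle i \rangle_n$ gives (a); the distinctness/incomparability of the limit leaves (which comes from $\langle i \rangle \wedge \langle j \rangle = \langle i \rangle_n \wedge \langle j \rangle_n \in \{0,1\}^\star$ for $n \ge i \vee j$) rules out atoms, finite or infinite, giving (c); and (d) follows from the identification $R_n^\infty = \RR(\langle 1 \rangle, \ldots, \langle n \rangle)$ jointly in $n$ --- note that this identification is what you should lean on, since matching the one-dimensional marginals $n!\prod_k \nu(\tau(y_k))$ alone would not identify the law of the whole process. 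For (b), however, you take a genuinely different route from the paper: the paper never identifies $\sigma$-fields directly, but argues at the level of laws --- the correspondence between distributions of infinite bridges and of their labeled versions is compatible with convex combinations, hence preserves extremality, so triviality of the tail of $(R_n^\infty)_{n \in \bN}$ is equivalent to ergodicity of the exchangeable sequence $(\langle i \rangle)_{i \in \bN}$ (citing the analogous Proposition 5.19 for R\'emy's chain), and de Finetti converts ergodicity into i.i.d.

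The gap in your version sits exactly at the point you flag, and it does not ``follow from the deterministic mutual measurability'' as you suggest. The recovery statement gives $\sigma\bigl((\tilde R_n^\infty)_{n \in \bN}\bigr) = \sigma\bigl((\langle i \rangle)_{i \in \bN}\bigr)$ and, combined with the fact that permuting $\langle 1 \rangle, \ldots, \langle N \rangle$ leaves $R_m^\infty$ unchanged for $m \ge N$, it yields the inclusion of the tail $\sigma$-field of the \emph{unlabeled} chain in the exchangeable $\sigma$-field of $(\langle i \rangle)_{i \in \bN}$ (mod null); that is all you need for the easy direction, i.i.d. implies trivial tail, via Hewitt--Savage. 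But it gives nothing toward the reverse inclusion, which is the one your argument for ``trivial tail $\Rightarrow$ i.i.d.'' actually uses: the labels carry extra randomness not present in $(R_n^\infty)_{n \in \bN}$, so reconstructing the labeled path from $(\langle i \rangle)_{i \in \bN}$ does not place the exchangeable $\sigma$-field (equivalently, the $\sigma$-field of the de Finetti directing measure) inside the tail of the unlabeled chain. To close this you need a separate argument, for instance: for each $y \in \{0,1\}^\star$ at most one leaf of $R_n^\infty$ is a strict prefix of $y$, so $\#\{i \le n : y \le \langle i \rangle\}$ and $\#\{v \in \LL(R_n^\infty) : y \le v\}$ differ by at most one, hence the a.s. limits $\lim_{n} n^{-1}\#\{v \in \LL(R_n^\infty) : y \le v\}$ are tail measurable for $(R_n^\infty)_{n \in \bN}$ and determine the directing measure; triviality of the tail then forces the directing measure to be deterministic and the sequence i.i.d. (Alternatively, replace the whole $\sigma$-field identification by the paper's extremality-preservation argument.) As written, the step ``triviality of the tail forces the de Finetti mixing measure to be degenerate'' is unjustified.
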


\begin{proof}
(a) It is clear by construction that $(\langle i \rangle_n)_{i \in [n]}$ is 
(finitely) exchangeable and the claim follows upon taking limits as $n \rightarrow \infty$.

\noindent
(b) The bijective correspondence between the distributions of the infinite bridges $(R_n^\infty)_{n \in \bN}$ 
and the distributions of their labeled versions $(\tilde R_n^\infty)_{n \in \bN}$ 
is compatible with convex combinations, and hence preserves extremality. 
Therefore the  tail $\sigma$-field of the infinite bridge $(R_n^\infty)_{n \in \bN}$ is $\bP$-a.s.
trivial if and only if the exchangeable sequence $(\langle i \rangle)_{i \in \bN}$ is ergodic.
(This situation closely parallels one appearing in the analysis of R\'emy's tree growth chain in \cite{Remy}, 
and we refer to the more detailed argument in Proposition 5.19 (see also the subsequent Remark 5.20) of \cite{Remy}.)
Finally,  a well-known consequence of de Finetti's theorem is that an exchangeable sequence is
ergodic if and only if it is independent and identically distributed.

\noindent
(c) For any $u \in \{0,1\}^\star$, the sequence $(\ind\{u = \langle k \rangle\})_{k \in \bN}$ is
independent and identically distributed, and hence
$\#\{k \in \bN: u = \langle k \rangle\} = 0$ $\bP$-a.s.
or
$\#\{k \in \bN: u = \langle k \rangle\} = \infty$ $\bP$-a.s.
Now, if $\bP\{\langle i \rangle \in \{0,1\}^\star\} > 0$ there would be a $u \in \{0,1\}^\star$
such that with positive probability $\langle i \rangle_n = \langle i \rangle = u$ for all $n$
sufficiently large. Then, on the event $\{\langle i \rangle = u\}$ we would have $\#\{k \in \bN:  \langle k \rangle =u\} = 1$, 
 since it follows from the construction in Definition \ref{labeling}
that $\langle j \rangle \ne \langle i \rangle$ for $j \ne i$ when $\langle i \rangle \in \{0,1\}^\star$. This shows that $\bP\{\langle i \rangle \in \{0,1\}^\star\} = 0$.

We therefore have that $(\langle k \rangle)_{k \in \bN}$ is an independent identically distributed sequence of
$\{0,1\}^\infty$-valued random variables.  Because 
$\langle i \rangle \wedge  \langle j \rangle = \langle i \rangle_n \wedge  \langle j \rangle_n \in \{0,1\}^\star$
for all $n \ge i \vee j$ $\bP$-a.s. when $i \ne j$, it follows that
$\langle i \rangle \ne \langle j \rangle$ $\bP$-a.s. for $i \ne j$ and the common distribution of
$(\langle k \rangle)_{k \in \bN}$ is diffuse. 

\noindent
(d)
We have already seen that when $\nu$ is a diffuse probability measure
on $\{0,1\}^\infty$ the process $(\prescript{\nu}{}R_n)_{n \in \bN}$ 
is an infinite bridge which, by the Hewitt-Savage zero-one law, has a trivial tail $\sigma$-field.

Conversely, suppose that the infinite bridge $(R_n^\infty)_{n \in \bN}$ has a trivial tail $\sigma$-field.
Let $\nu$ be the common diffuse distribution of the independent, identically distributed sequence
of $\{0,1\}^\infty$-valued random variables $(\langle i \rangle)_{i \in \bN}$.  
In the notation of the Introduction, it is clear that 
$R_n^\infty = \RR(\langle 1 \rangle, \ldots, \langle n \rangle)$, $n \in \bN$, and so
$(R_n^\infty)_{n \in \bN}$ has the same distribution as $(\prescript{\nu}{}R_n)_{n \in \bN}$.
\end{proof}

\begin{corollary}
\label{C:main}
The extremal normalized nonnegative harmonic functions are precisely those that arise as
$\ss \mapsto \lim_{k \to \infty} K(\ss, \tt_k)$ for a sequence $(\tt_k)_{k \in \bN}$ with 
$M(\tt_k) \to \infty$ as $k \to \infty$. 
There is a bijective correspondence between diffuse probability measures on $\{0,1\}^\infty$
and such functions:  the measure $\nu$ corresponds to the normalized nonnegative harmonic function
$\prescript{\nu}{}h$ of \eqref{defnuh} and, conversely, if $h$ is
an extremal normalized nonnegative harmonic function and 
$(R_n^\infty)_{n \in \bN}$ is the infinite bridge constructed as the Doob $h$-transform
of $(R_n)_{n \in \bN}$ using the function $h$, then $h=\prescript{\nu}{}h$,
where $\nu$ is the common
distribution of the independent identically distributed sequence $(\langle i \rangle)_{i \in \bN}$
associated with the labeled infinite bridge $(\tilde R_n^\infty)_{n \in \bN}$.
\end{corollary}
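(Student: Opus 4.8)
The plan is to deduce the corollary by combining Theorem~\ref{T:main} with the general Doob--Martin machinery recalled in the Introduction, so that essentially no new computation is needed. The starting point is the standard fact that every extremal normalized nonnegative harmonic function arises as a Doob--Martin limit $\ss \mapsto \lim_{k} K(\ss, \tt_k)$ along some sequence with $M(\tt_k) \to \infty$, and conversely any such limit that is harmonic (rather than merely excessive) and extremal in the convex set of normalized nonnegative harmonic functions is extremal; the bridge obtained as the corresponding Doob $h$-transform then has a trivial tail $\sigma$-field precisely when $h$ is extremal. So to prove the corollary it suffices to identify, for each extremal $h$, the limit $h = \lim_k K(\cdot, \tt_k)$ explicitly as $\prescript{\nu}{}h$ for a diffuse $\nu$, and to check that this assignment $\nu \leftrightarrow \prescript{\nu}{}h$ is a bijection onto the extremal functions.

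First I would take an extremal normalized nonnegative harmonic function $h$ and form the infinite bridge $(R_n^\infty)_{n\in\bN}$ as its Doob $h$-transform; by extremality this bridge has a trivial tail $\sigma$-field. Applying Theorem~\ref{T:main}(b),(c),(d), the associated labeling sequence $(\langle i \rangle)_{i\in\bN}$ is i.i.d.\ with a diffuse law $\nu$ on $\{0,1\}^\infty$, and $(R_n^\infty)_{n\in\bN}$ has the same law as $(\prescript{\nu}{}R_n)_{n\in\bN}$. Since $(\prescript{\nu}{}R_n)_{n\in\bN}$ is the Doob $h$-transform of $(R_n)_{n\in\bN}$ with harmonic function $\prescript{\nu}{}h$ (Section~\ref{S:harmonic_examples}), and since a transient Markov chain together with its forward law determines the harmonic function used in an $h$-transform up to the normalization $h(\emptyset)=1$, we get $h = \prescript{\nu}{}h$. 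Conversely, $\prescript{\nu}{}h$ for diffuse $\nu$ was already shown in Section~\ref{S:harmonic_examples} to be harmonic, normalized, and extremal (the latter because the tail $\sigma$-field of $(\prescript{\nu}{}R_n)_{n\in\bN}$ is trivial by Hewitt--Savage). This shows the extremal normalized nonnegative harmonic functions are exactly $\{\prescript{\nu}{}h : \nu \text{ diffuse}\}$.

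Next I would verify that $\nu \mapsto \prescript{\nu}{}h$ is injective: this is immediate from the remark in the Introduction that $\nu$ can be recovered a.s.\ from the tail $\sigma$-field of $(\prescript{\nu}{}R_n)_{n\in\bN}$, or more directly from the formula $\prescript{\nu}{}h(\ss) = \prod_{a\in\LL(\ss)} \nu(\tau(a))/\gamma(\tau(a))$ evaluated on the trees $\ss$ whose single leaf is an arbitrary $y\in\{0,1\}^\star$, which yields all the values $\nu(\tau(y))$ and hence $\nu$. Finally, for the realization as a Doob--Martin limit: given diffuse $\nu$, Theorem~\ref{T:main}(d) (applied to the bridge $(\prescript{\nu}{}R_n)$ itself) together with the last assertion of Theorem~\ref{T:summary} — equivalently, Remark~\ref{R:nuh_as_limit}, which shows $\lim_n K(\ss, \prescript{\nu}{}R_n) = \prescript{\nu}{}h(\ss)$ $\bP$-a.s.\ for all $\ss$ — exhibits an explicit sequence $\tt_k = \prescript{\nu}{}R_k(\omega)$ (for a.e.\ fixed $\omega$) along which $K(\cdot,\tt_k) \to \prescript{\nu}{}h$; and conversely any convergent Doob--Martin limit is excessive, is harmonic precisely in the cases covered above, and when extremal must (by the general theory) be one of the $\prescript{\nu}{}h$.

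I expect the only real subtlety to be the bookkeeping around ``excessive versus harmonic'' and ``extremal among harmonic functions versus extremal among excessive functions'': Remark~\ref{R:nuh_as_limit} flags that $\prescript{\nu}{}h$ is a priori only excessive, and in principle a Doob--Martin limit along some $\tt_k$ with $M(\tt_k)\to\infty$ could fail to be harmonic. The clean way around this is to invoke the standard dichotomy that the minimal Doob--Martin boundary is supported on harmonic functions and that every extremal normalized nonnegative harmonic function is a minimal-boundary limit — this is exactly the point where the phrase ``the minimal Doob--Martin boundary coincides with the full Doob--Martin boundary'' from Section~\ref{S:harmonic_examples} is being cashed in — so that identifying the minimal boundary with $\{\prescript{\nu}{}h : \nu \text{ diffuse}\}$ via Theorem~\ref{T:main} automatically forces every boundary limit that is harmonic and extremal to lie in this family. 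Everything else is a direct transcription of Theorem~\ref{T:main} and the formulas already derived.
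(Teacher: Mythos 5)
The easy half of your argument (extremal $\Rightarrow$ $h={}^{\nu}h$ for a diffuse $\nu$, via Theorem~\ref{T:main} and the marginal ratios that pin down the harmonic function of an $h$-transform; injectivity of $\nu\mapsto\prescript{\nu}{}h$; and the realization of each $\prescript{\nu}{}h$ as a limit $\lim_k K(\cdot,\tt_k)$ along $\tt_k=\prescript{\nu}{}R_k(\omega)$ using Remark~\ref{R:nuh_as_limit}) agrees with the paper. But the proposal has a genuine gap in the other direction, which is the actual content of the corollary: you must show that \emph{every} normalized harmonic function arising as $\lim_k K(\cdot,\tt_k)$ with $M(\tt_k)\to\infty$ is extremal. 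Your way around this is to ``invoke the standard dichotomy'' that the minimal Doob--Martin boundary coincides with the full boundary, but that is not a standard fact --- in general the full Doob--Martin boundary can strictly contain the minimal boundary, and boundary limits need not be extremal (nor even harmonic). The paper states explicitly that this coincidence is precisely what Theorem~\ref{T:main} together with Corollary~\ref{C:main} establishes, so citing it as known is circular; your opening sentence (``any such limit that is harmonic \dots and extremal \dots is extremal'') is likewise vacuous and signals that this direction was never actually addressed.

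What is missing is the argument the paper supplies for exactly this step: given $h=\lim_k K(\cdot,\tt_k)$, form the infinite bridge $(R_n^\infty)$ as the $h$-transform and its labeled version; by Theorem~\ref{T:main} extremality of $h$ reduces to ergodicity of the exchangeable sequence $(\langle i\rangle)_{i\in\bN}$, which is transferred to ergodicity of the meet array $\{\langle i\rangle\wedge\langle j\rangle\}$ and checked via the Aldous/Kallenberg criterion (independence of subarrays over disjoint finite index sets). That independence is verified by noting that the labeled finite bridges to $\tt_k$ converge in distribution to the labeled infinite bridge, and by coupling the uniform random listing of the leaves of $\tt_k$ (sampling without replacement) with i.i.d.\ uniform draws, for which the subarrays are exactly independent; the coupling error vanishes as $M(\tt_k)\to\infty$. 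Without this (or some substitute ``bare-hands'' computation), the claim that every Doob--Martin limit with $M(\tt_k)\to\infty$ is extremal --- equivalently that the map onto $\{\prescript{\nu}{}h:\nu\ \text{diffuse}\}$ is onto the set of all such limits --- is unproved.
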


\begin{proof}
We know from Theorem~\ref{T:main} that the extremal normalized nonnegative harmonic functions 
correspond to infinite bridges of the form $(\prescript{\nu}{}R_n)_{n \in \bN}$ where $\nu$ is 
a diffuse probability measure on $\{0,1\}^\infty$,
and hence they are the harmonic functions~$\prescript{\nu}{}h$.  
In order to see that the correspondence between $\nu$ and the distribution of  $(\prescript{\nu}{}R_n)_{n \in \bN}$ is bijective,
we observe that $\nu$ is determined uniquely by 
the distribution of the labeled version of $(\prescript{\nu}{}R_n)_{n \in \bN}$ 
and hence by the distribution of $(\prescript{\nu}{}R_n)_{n \in \bN}$ itself.

It remains to check that if the normalized nonnegative harmonic function $h$ is given by
$h(\ss) = \lim_{k \to \infty} K(\ss, \tt_k)$ for a sequence $(\tt_k)_{k \in \bN}$ with 
$M(\tt_k) \to \infty$ as $k \to \infty$, then $h$ is extremal.  We will follow
an argument similar to the proof of Corollary 5.21 in \cite{Remy}.  
Writing $(R_n^\infty)_{n \in \bN}$ for the
infinite bridge given by the Doob $h$-transform of $(R_n)_{n \in \bN}$ associated with $h$,
we recall that extremality of $h$ is equivalent to the tail
$\sigma$-field of $(R_n^\infty)_{n \in \bN}$ being $\bP$-a.s. trivial.  
By Theorem~\ref{T:main}, this is in turn equivalent to showing that the exchangeable sequence 
$(\langle i \rangle)_{i \in \bN}$ has the equivalent properties of being
ergodic or independent and identically distributed.

Note that $\langle i \rangle$ is the unique $v \in \{0,1\}^\infty$ such that
$\langle i \rangle \wedge \langle j \rangle \le v$ for all $j \ne i$.
It follows that there is a measurable bijection mapping the sequence
$(\langle i \rangle)_{i \in \bN}$ to the jointly exchangeable $\{0,1\}^\star$-valued array 
$\{\langle i \rangle \wedge \langle j \rangle: i,j \in \bN, \; i \ne j\}$ in such a way
that the sequence will be ergodic if and only if the array is ergodic.  
By a result of Aldous
(see, for example, \cite[Lemma~7.35]{MR2161313}),
the array is ergodic if and only if for any disjoint finite subsets $H_1, \ldots, H_s$
of $\bN$ the finite subarrays 
$\{\langle i \rangle \wedge \langle j \rangle: i,j \in H_r, \; i \ne j\}$, $1 \le r \le s$,
are independent.

Recall that
$(R_1^{\tt_k}, \ldots, R_{M(\tt_k)}^{\tt_k})$ denotes the bridge to $\tt_k$.
For any $\ell \in \bN$, $R_\ell^{\tt_k}$ converges in distribution to $R_\ell^\infty$ as $k \to \infty$.
We can build a labeled version $(\tilde R_1^{\tt_k}, \ldots, \tilde R_{m(\tt_k)}^{\tt_k})$
of $(R_1^{\tt_k}, \ldots, R_{m(\tt_k)}^{\tt_k})$ in much the same way that we built a labeled version
of an infinite bridge: $\tilde R_{m(\tt_k)}^{\tt_k}$ consists of 
the tree $R_{m(\tt_k)}^{\tt_k} = \tt_k$ with its $M(\tt_k)$ leaves labeled uniformly at random with the set
$[M(\tt_k)]$ and the backward evolution of such a labeled finite bridge is the same 
as that of the labeled infinite bridge.  It is clear that 
$\tilde R_\ell^{\tt_k}$ converges in distribution to $\tilde R_\ell^\infty$ as $k \to \infty$
for all $\ell \in \bN$: indeed,  $\tilde R_\ell^{\tt_k}$ and $\tilde R_\ell^\infty$
are just $R_\ell^{\tt_k}$ and $R_\ell^\infty$, respectively, equipped with uniform random labelings
of their $\ell$ leaves by the set $[\ell]$.  

Write $\langle i \rangle_\ell^k$ for the element of 
$\{0,1\}^\star$ labeled $i$ in  $\tilde R_\ell^{\tt_k}$ for $1 \le i \le \ell \le M(\tt_k)$.
The finite array $\{\langle i \rangle_\ell^k \wedge \langle j \rangle_\ell^k : 1 \le i \ne j \le \ell\}$
converges in distribution to the finite array 
$\{\langle i \rangle_\ell \wedge \langle j \rangle_\ell : 1 \le i \ne j \le \ell\} 
=
\{\langle i \rangle \wedge \langle j \rangle : 1 \le i \ne j \le \ell\}$ as $k \to \infty$.

Write $u_1^k, \ldots, u_{M(\tt_k)}^k$ for the leaves of $\tt_k$.  Suppose that $I_1^k, \ldots, I_{M(\tt_k)}^k$
is a listing of $[M(\tt_k)]$ in uniform random order and $J_1^k, \ldots, J^k_{M(\tt_k)}$ is a sequence
of independent random variables uniformly distributed on $[M(\tt_k)]$. By definition,
$(\langle i \rangle_\ell^k)_{1 \le i \le \ell}$ has the same distribution as
$(u_{I_i^k}^k)_{1 \le i \le \ell}$.  We may couple $I_1^k, \ldots, I_{M(\tt_k)}^k$ and $J_1^k, \ldots, J_{M(\tt_k)^k}$
together on the same probability space in such a way that
$\lim_{k \to \infty} \bP\{\exists 1 \le i \le \ell : I_i^k  \ne J_i^k\} = 0$ and hence
$\lim_{k \to \infty} \bP\{\exists 1 \le i \ne j \le \ell : u_{I_i}^k \wedge u_{I_j^k}^k \ne u_{J_i^k}^k \wedge u_{J_j^k}^k\} = 0$.
If $H_1, \ldots, H_s$ is a collection of disjoint subsets of $[\ell]$, and $k$ is so large that $M(\tt_k \ge \ell$, then it is clear that the arrays
$\{u_{J_i^k}^k \wedge u_{J_j^k}^k : i,j \in H_r, \; i \ne j\}$, $1 \le r \le s$, are independent and hence the arrays
$\{\langle i \rangle \wedge \langle j \rangle: i,j \in H_r, \; i \ne j\}$, $1 \le r \le s$, are also independent,
as required.
\end{proof}

\section{Examples of excessive functions}
\label{S:excessive_examples}

We saw in Section~\ref{S:harmonic_examples} that for a diffuse probability measure 
$\nu$ the excessive function
$\prescript{\nu}{}h$ of \eqref{defnuh} is actually harmonic.
The definition of  
$\prescript{\nu}{}h$ still makes sense when $\nu$ is not diffuse
and it is interesting to investigate the properties of this excessive function in that case.

Let $G$ be the potential kernel (that is, the Green kernel) for $(R_n)_{n \in \bN}$, which in our situation is given by $G(\ss, \tt) = \mathbb P\{R_k = \tt | R_m = \ss\}$ for $\ss \in \mathbb S_m$, $\tt \in \mathbb S_k$.
Because the function $\prescript{\nu}{}h$ is excessive, we have the Riesz decomposition 
$\prescript{\nu}{}h(\rr) = H(\rr) + \sum_{\ss \in \bS} G(\rr,\ss) \, \eta(\ss)$
for some nonnegative harmonic function $H$ and measure $\eta$ determined by
\[
\eta(\ss) 
= 
\prescript{\nu}{}h(\ss)
-
\sum_\tt 
p(\ss,\tt)\, 
\prescript{\nu}{}h(\tt).
\]
We claim that $H \equiv 0$ so that $\prescript{\nu}{}h$ is a pure potential.

Using the notation of Section~\ref{S:forward_probs} with the first sum for Case I and the second sum for Case II,
\[
\begin{split}
& \sum_\tt p(\ss,\tt) \prescript{\nu}{}h(\tt) \\
& \quad =
\sum_{w} 2^{-|w|} \, \prescript{\nu}{}h(\ss)  2^{|w|} \nu(\tau(w)) \\
& \qquad +
\sum_{y,y',y''} 2 \frac{2^{-|y'|} 2^{-|y''|}}{2^{-|y|}} \, 
\prescript{\nu}{}h(\ss) 
\frac{2^{|y'|} \nu(\tau(y')) 2^{|y''|} \nu(\tau(y''))}
{2^{|y|} \nu(\tau(y))} \\
& \quad =
\prescript{\nu}{}h(\ss)
\left[
\sum_{w} \nu(\tau(w))
+
\sum_{y \in \LL(\ss)}
\nu(\tau(y)) 
\sum_{y',y''}
2 \frac{\nu(\tau(y'))}{\nu(\tau(y))} \frac{\nu(\tau(y''))}{\nu(\tau(y))}
\right], \\
\end{split}
\]
where the summation in the first sum of the middle and right
members is over  
$w = u_1 u_2 \ldots u_{m-1} \bar u_m \notin \ss$
such that $u_1 u_2 \ldots u_{m-1} u_m \in \ss \setminus \LL(\ss)$ (Case I),
and the summation in the second sum of these members is over
$y = u_1 u_2 \ldots u_{m-1} u_m \in \LL(\ss)$,
$y' = u_1 u_2 \ldots u_{m-1} u_m v_1 \ldots v_p \notin \ss$, and 
$y'' = u_1 u_2 \ldots u_{m-1} u_m v_1 \ldots \bar v_p \notin \ss$ for 
some $p \ge 1$ and $v_1, \ldots, v_p \in \{0,1\}$ (Case II).

Now 
\[
\sum_{w} \nu(\tau(w)) = 1 - \sum_{y \in \LL(\ss)} \nu(\tau(y))
\]
where again the range of summation for $w$ is as in Case I,
and for $y\in \LL(\ss)$
\[
2 \sum_{y',y''}  
\nu(\tau(y')) \nu(\tau(y''))
=
\nu \otimes \nu\{(x', x'') : y < x', \, y < x'', \, x' \ne x''\}
\]
where again the range of summation for $y',y''$ is as in Case II.
Therefore,
\[
\begin{split}
& \prescript{\nu}{}h(\ss)
-
\sum_\tt p(\ss,\tt)\, \prescript{\nu}{}h(\tt) \\
& \quad =
\prescript{\nu}{}h(\ss)
\sum_{y \in \LL(\ss)} \nu(\tau(y))
\bar \nu_y \otimes \bar \nu_y\{(x', x'') : x' = x''\}, \\
\end{split}
\]
where we write $\bar \nu_y$ for the restriction of $\nu$ to $\tau(y)$ normalized to be
a probability measure (if $\nu(\tau(y)) = 0$ we define $\bar \nu_y$ arbitrarily).
Thus, the measure appearing in the Riesz decomposition 
of the excessive function $\prescript{\nu}{}h$ is given by
\[
\eta(\ss) = \prod_{a \in \LL(\ss)} 2^{|a|} \nu(\tau(a))
\sum_{y \in \LL(\ss)} \nu(\tau(y))
\bar \nu_y \otimes \bar \nu_y\{(x', x'') : x' = x''\}.
\]

By general theory, $\prescript{\nu}{}h$ has the Choquet representation
\[
\prescript{\nu}{}h(\rr)
=
\int_{\partial \bS} K(\rr,b) \, \theta(db) + \sum_{\ss \in \bS} K(\rr,\ss) \, \theta(\ss),
\]
where $\partial \bS$ is the Doob--Martin boundary, $K(\rr,b)$, $\rr \in \bS$, $b \in \partial \bS$
is the extended Doob--Martin kernel, and $\theta$ is a probability measure on 
$\bar \bS := \partial \bS \cup \bS$.

Recalling from \eqref{margindist} that $G(\emptyset, \ss) = \# \LL(\ss) ! \prod_{a \in \LL(\ss)} 2^{-|a|}$, we have
\[
\theta(\ss)
= M(\ss) ! 
\prod_{a \in \LL(\ss)} \nu(\tau(a))
\sum_{y \in \LL(\ss)} \nu(\tau(y))
\bar \nu_y \otimes \bar \nu_y\{(x', x'') : x' = x''\}.
\]
Letting $Z_1, Z_2, \ldots$ be i.i.d. $\{0,1\}^\infty$-valued random variables with common distribution
$\nu$ we can write, with $n = M(\ss)$,
\[
\begin{split}
\theta(\ss) 
&=
\bP\biggl(\{Z_i \ne Z_j, \, 1 \le i \ne j \le n, \; Z_{n+1} = Z_k  \; \text{for some $1 \le k \le n$}\} \\
& \qquad \cap \{\RR(Z_1, \ldots, Z_n) = \ss \}
\biggr). \\
\end{split}
\]
Thus,
$\sum_{s \in \bS} \theta(s) = \bP\{\exists 1 \le i < j < \infty : Z_i = Z_j\} = 1$ whenever 
$\nu$ has a nontrivial discrete component, and so the function $\prescript{\nu}{}h$ is indeed
a pure potential in this case.  

By arguments similar to those in Section~\ref{S:harmonic_examples},
it is possible to check that the Doob $h$-transform of $(R_n)_{n \in \bN}$
built from the excessive function $\prescript{\nu}{}h$ can be constructed as follows:
let $Z_1, Z_2, \ldots$ be i.i.d. with common distribution $\nu$
and while $Z_1, \ldots, Z_n$ are distinct the value of the chain
is $\RR(Z_1, \ldots, Z_n)$, but the chain is killed and sent to the
cemetery at the first time $n$ such that $Z_n$ is equal 
to one of the previously observed values
$\{Z_1, \ldots, Z_{n-1}\}$.  We denote this killed Markov chain
by $(\prescript{\nu}{}R_n)_{n \in \bN}$, just as we did when $\nu$ is diffuse.

In general, for each $\ss \in \bS$ the function $\nu \mapsto \prescript{\nu}{}h(\ss)$
is continuous with respect to the topology of weak convergence of probability
measures on $\{0,1\}^\infty$.  Similarly, the mapping from $\nu$ to the
distribution of $(\prescript{\nu}{}R_n)_{n \in \bN}$ is continuous provided
that we identify the cemetery state with the point at infinity in the one-point
compactification of $\bS$.

We note that unlike the situation when $\nu$ is diffuse, different choices of
$\nu$ with a discrete component
can result in the same distribution for 
$(\prescript{{\nu}}{}R_n)_{n \in \bN}$.  
For example, write 
$a = (0,0,\ldots)$,  
$b = (0,1,1,\ldots)$, 
$c = (1,1,\ldots)$, 
and 
$d = (1,0,0,\ldots)$,
and put 
$\nu_1=\frac{1}{3}\delta_{a} + \frac{2}{3} \delta_{c}$,
$\nu_2=\frac{1}{3}\delta_{b} + \frac{2}{3} \delta_{d}$,
$\nu_3=\frac{2}{3}\delta_{a} + \frac{1}{3} \delta_{c}$,
and
$\nu_4=\frac{2}{3}\delta_{b} + \frac{1}{3} \delta_{d}$.
Denote the cemetery state by $\dag$ and let $\tt$ be the tree with the
three vertices $\emptyset, 0, 1$.  Then, for $1 \le j \le 4$,
\[
\bP\{\prescript{{\nu_j}}{}R_1 = \emptyset, \, \prescript{{\nu_j}}{}R_2 = \tt, \, \prescript{{\nu_j}}{}R_3 = \dag\}
= \frac{4}{9}
\]
and
\[
\bP\{\prescript{{\nu_j}}{}R_1 = \emptyset, \, \prescript{{\nu_j}}{}R_2 = \dag\}
= \frac{5}{9},
\]
so that the chains $(\prescript{{\nu_j}}{}R_n)_{n \in \bN}$, $1 \le j \le 4$, 
have the same distribution.
Observe that $\prescript{{\nu_j}}{}h$ is the same for each $j$, whereas
when $\nu^*$ and $\nu^{**}$ are different diffuse probability distributions
the fact that the distributions  of $(\prescript{\nu^*}{}R_n)_{n \in \bN}$ 
and $(\prescript{\nu^{**}}{}R_n)_{n \in \bN}$ differ certainly implies
that $\prescript{\nu^*}{}h \ne \prescript{\nu^{**}}{}h$. 

\bigskip\noindent
{\bf Acknowledgments:}  We thank Kevin Leckey and Ralph Neininger for valuable information about the literature
around radix sort algorithms.

\def\cprime{$'$}
\providecommand{\bysame}{\leavevmode\hbox to3em{\hrulefill}\thinspace}
\providecommand{\MR}{\relax\ifhmode\unskip\space\fi MR }
\providecommand{\MRhref}[2]{%
  \href{http://www.ams.org/mathscinet-getitem?mr=#1}{#2}
}
\providecommand{\href}[2]{#2}


\begin{thebibliography}{EGW15}

\bibitem[CFV01]{MR1887308}
Julien Cl{\'e}ment, Phillippe Flajolet, and Brigitte Vall{\'e}e,
  \emph{Dynamical sources in information theory: a general analysis of trie
  structures}, Algorithmica \textbf{29} (2001), no.~1-2, 307--369, Average-case
  analysis of algorithms (Princeton, NJ, 1998). \MR{1887308}

\bibitem[Dev92]{MR1161060}
Luc Devroye, \emph{A study of trie-like structures under the density model},
  Ann. Appl. Probab. \textbf{2} (1992), no.~2, 402--434. \MR{1161060}

\bibitem[Doo59]{MR0107098}
Joseph~L. Doob, \emph{Discrete potential theory and boundaries}, J. Math. Mech.
  \textbf{8} (1959), 433--458; erratum 993. \MR{0107098 (21 \#5825)}

\bibitem[EGW12]{MR2869248}
Steven~N. Evans, Rudolf Gr{\"u}bel, and Anton Wakolbinger, \emph{Trickle-down
  processes and their boundaries}, Electron. J. Probab. \textbf{17} (2012), no.
  1, 58. \MR{2869248}

\bibitem[EGW15]{Remy}
Steven~N. Evans, Rudolf Gr\"ubel, and Anton Wakolbinger, \emph{{D}oob--{M}artin
  boundary of {R}\'emy's tree growth chain}, 2015, To appear in {\em Annals of
  Probability}. Available at arXiv:1411.2526 [math.PR].

\bibitem[Kal05]{MR2161313}
Olav Kallenberg, \emph{Probabilistic symmetries and invariance principles},
  Probability and its Applications (New York), Springer, New York, 2005.
  \MR{2161313 (2006i:60002)}

\bibitem[Knu98]{MR3077154}
Donald~E. Knuth, \emph{The art of computer programming. {V}ol. 3},
  Addison-Wesley, Reading, MA, 1998, Sorting and searching, Second edition [of
  MR0445948]. \MR{3077154}

\bibitem[LNS15]{LNS}
Kevin Leckey, Ralph Neininger, and Wojciech Szpankowski, \emph{A limit theorem
  for {R}adix {S}ort and tries with {M}arkovian input}, 2015, Available at
  arXiv:1505.07321 [math.PR].

\bibitem[Mah92]{MR1140708}
Hosam~M. Mahmoud, \emph{Evolution of random search trees}, Wiley-Interscience
  Series in Discrete Mathematics and Optimization, John Wiley \& Sons, Inc.,
  New York, 1992, A Wiley-Interscience Publication. \MR{1140708}

\bibitem[SJ91]{JaSp}
Wojciech Szpankowski and Phillippe Jacquet, \emph{Analysis of digital tries
  with markovian dependency}, IEEE Trans. Information Theory \textbf{27}
  (1991), 1470--1475.

\bibitem[Szp01]{MR1816272}
Wojciech Szpankowski, \emph{Average case analysis of algorithms on sequences},
  Wiley-Interscience Series in Discrete Mathematics and Optimization,
  Wiley-Interscience, New York, 2001, With a foreword by Philippe Flajolet.
  \MR{1816272}

\end{thebibliography}
\end{document}